\newtheorem{Lemma}{Lemma}
\newtheorem{Theorem}{Theorem}
\newtheorem{Remark}{Remark}
\begin{document}

\title{A New Diffusive Representation\\ for Fractional Derivatives, Part II:\\ Convergence Analysis of the Numerical Scheme}

\author{Kai Diethelm\footnote{\ \ Faculty of Applied Natural Sciences and Humanities, 
	University of Applied Sciences Würzburg-Schweinfurt, 
	Ignaz-Schön-Str.\ 11, 97421 Schweinfurt, Germany, kai.diethelm@fhws.de}}
	
\maketitle

%

\abstract{Recently, we have proposed a new diffusive representation for fractional derivatives and,
	based on this representation, suggested an algorithm for their numerical computation.
	From the construction of the algorithm, it is immediately evident that the method is fast
	and memory efficient. Moreover, the method's design is such that good convergence
	properties may be expected. This paper here starts a systematic investigation of these
	convergence properties.}

\emph{Keywords:} Fractional derivative; Caputo derivative; diffusive representation; numerical method;
	convergence.
	
\emph{MSC 2020:} Primary 65D25; Secondary 26A33, 65L05, 65R20

\section{A Diffusive Representation Based Numerical Scheme for Computing Fractional Derivatives}

The topic of this paper is the analysis of a recently developed numerical method for the
approximate calculation of fractional derivatives \cite{Di2022}. Specifically, we assume a function
$y \in C^{\lceil \alpha \rceil}[a, a+T]$ to be given with some $T > 0$ and some $a \in \mathbb R$, 
and the task is to compute the Caputo derivative $D_a^\alpha y(t_j)$ of order $\alpha \in \mathbb R_+ \setminus \mathbb N$ 
of the function $y$ \cite[Chapter 3]{Di2010} at some points $t_0 < t_1 < t_2 < \cdots < t_N$ 
with $t_0 =a$ and $t_N = a+T$ being the two end points of the interval of interest.
The algorithm is based on the diffusive representation
\begin{equation}
	\label{eq:diff-rep}
	D_a^\alpha y(t) = \int_{-\infty}^\infty \phi_{\mathrm D}(w, t) \mathrm d w
\end{equation}
where the function $\phi_{\mathrm D}(w, t)$ in the
integrand on the right-hand side of eq.~\eqref{eq:diff-rep} is the solution to the initial value problem
\begin{equation}
	\label{eq:ivp-phi}
	\frac{\partial \phi_{\mathrm D}}{\partial t} (w, t)
	= - \mathrm e^w \phi_{\mathrm D}(w, t) 
		+  (-1)^{\lfloor \alpha \rfloor} \frac{\sin \alpha \pi}{\pi} 
			\mathrm e^{wq_{\mathrm D}} y^{(\lceil \alpha \rceil)}(t),
	\quad
	\phi_{\mathrm D}(w, a) = 0,
\end{equation}
cf.\ \cite[Theorem 1]{Di2022}. In eq.~\eqref{eq:ivp-phi}, $q_{\mathrm D} = \alpha - \lceil \alpha \rceil +1 \in (0,1)$;
thus, if $\alpha \in (0,1)$, i.e.\ in the case prevalent in most known technical applications
of fractional derivatives, we have $q_{\mathrm D} = \alpha$.

Based on eqs.\ \eqref{eq:diff-rep} and \eqref{eq:ivp-phi}, the algorithm developed in \cite{Di2022}
comprises two elements: 
\begin{enumerate}
\item approximate the integral in eq.\ \eqref{eq:diff-rep} by a suitable quadrature formula, and
\item compute the integrand function that needs to be evaluated by this quadrature formula by
	means of a numerical solver for the first order initial value problem~\eqref{eq:ivp-phi}.
\end{enumerate}
More precisely, there are two variants of the approach. Both variants use the quadrature formula
\begin{equation}
	\label{eq:qf}
	\int_{-\infty}^\infty \phi_{\mathrm D}(w, t) \mathrm d w
	= \int_0^\infty \mathrm e^{-u} \hat \phi(w, t) \mathrm d w
	\approx \sum_{k=1}^K a_{k,K}^{\mathrm{GLa}} \hat \phi(x_{k,K}^{\mathrm{GLa}}, t)
\end{equation}
with
\begin{equation}
	\label{eq:def-phihat}
	\hat \phi(w, t) 
		=  \mathrm e^w \left( \frac 1 {q_{\mathrm D}} \phi_{\mathrm D}( - w / q_{\mathrm D}, t)
						+  \frac 1 {1-q_{\mathrm D}} \phi_{\mathrm D}( w / (1-q_{\mathrm D}), t) \right)
\end{equation}
and $a_{k,K}^{\mathrm{GLa}}$ and $x_{k,K}^{\mathrm{GLa}}$ for $k = 1, 2, \ldots, K$ denoting the weights and nodes,
respectively, of the $K$-point Gauss-Laguerre quadrature formula, i.e.\ the Gaussian quadrature formula for the weight
function $\mathrm e^{- w}$.
The difference between the two methods is that, in order to compute the function values 
$\hat \phi(x_{k,K}^{\mathrm{GLa}}, t)$ for some given $t = t_n$ via the representation \eqref{eq:def-phihat},
they use either the backward Euler formula or the trapezoidal method for the solution $\phi_{\mathrm D}$ of the 
initial value problem \eqref{eq:ivp-phi}. In the present paper we shall concentrate on the case where the
backward Euler formula is used.

The basic motivation for the development of this technique was to use it as a building block
for a numerical solver for fractional differential equations. If this is done then,
like other numerical schemes based on diffusive representations or similar techniques,
this approach has some significant advantages over traditional schemes like the fractional Adams method
or fractional linear multistep methods:
\begin{enumerate}
\item The computational complexity is $O(N)$ where $N$ is the number of time steps over which the
	solution to the fractional differential equation is sought, whereas traditional methods usually 
	have a cost of $O(N^2)$ when implemented in a straightforward manner and $O(N \log N)$
	or $O(N \log^2 N)$ if more sophisticated implementations are used.
\item Due to the completely different way in which the inherent memory of the fractional differential
	operators is dealt with, the active memory requirements of the method are only $O(1)$ and not 
	$O(N)$ for the traditional methods or at best $O(\log N)$ for their modified versions.
\item Whereas some (but not all) traditional schemes require the use of a uniform mesh,
	this approach gives the user complete freedom to use any discretization whatsoever of the interval
	on which the fractional differential equation is to be solved.
\end{enumerate}

\section{Convergence Properties of the Numerical Method}

The paper \cite{Di2022} where the algorithm described above had been developed contains some numerical
results. It also provides a qualitative convergence analysis and a heuristic argumentation why the method should 
have satisfactory convergence properties, and it describes how to avoid certain potential pitfalls in the
implementation of the algorithm in finite precision arithmetic.
However, it does not provide a thorough convergence analysis in the quantitative sense, so we shall now 
address this matter here.
To this end, we assume that the function values of $D_a^\alpha y$ are to be approximately computed 
at the points $t_n$, $n = 0, 1, 2, \ldots, N$, with $a = t_0 < t_1 < \ldots < t_N$. We then proceed as follows.

First of all, we note that our approximation formula takes the final form
\begin{equation}
	\label{eq:finalform}
	D_a^\alpha y(t_n) \approx \sum_{k=1}^K a_{k,K}^{\mathrm{GLa}} \hat \phi_{k,K,n}
\end{equation}
where $ \hat \phi_{k,K,n}$ denotes the approximation for $\hat \phi(x_{k,K}^{\mathrm{GLa}}, t_n)$
obtained by the ODE solver in question using the grid $\{ t_n : n = 0, 1, \ldots, N \}$.
It thus follows that the error of this approximation is
\begin{align}
	R_{\alpha, K, n}(y) 
	& = D_a^\alpha y(t_n) - \sum_{k=1}^K a_{k,K}^{\mathrm{GLa}} \hat \phi_{k,K,n} \nonumber \\
	\label{eq:error1}
	& = \int_0^\infty \mathrm e^{-u} \hat \phi(w, t) \mathrm d w
		- \sum_{k=1}^K a_{k,K}^{\mathrm{GLa}} \hat \phi_{k,K,n} \\
	& = R^{\mathrm Q}_{\alpha, K}(y; t_n) + R^{\mathrm{ODE}}_{\alpha, K, n}(y) \nonumber
\end{align}
where
\begin{equation}
	\label{eq:error-q}
	R^{\mathrm Q}_{\alpha, K}(y; t_n) 
	=  \int_0^\infty \mathrm e^{-u} \hat \phi(w, t_n) \mathrm d w
		- \sum_{k=1}^K a_{k,K}^{\mathrm{GLa}} \hat \phi(x_{k,K}^{\mathrm{GLa}}, t_n)
\end{equation}
is the error induced by the numerical quadrature, and
\begin{equation}
	\label{eq:error-ode}
	R^{\mathrm{ODE}}_{\alpha, K, n}(y) 
	=  \sum_{k=1}^K a_{k,K}^{\mathrm{GLa}} \left (\hat \phi(x_{k,K}^{\mathrm{GLa}}, t_n) - \hat \phi_{k,K,n} \right)
\end{equation}
is the error induced by the ODE solver. 

\begin{Remark}
	In the decomposition \eqref{eq:error1} of the total error, it is clear from eq.\ \eqref{eq:error-q} that
	the component $R^{\mathrm Q}_{\alpha, K}(y; t_n)$---as indicated by the notation---depends only 
	on the function $y$ whose fractional derivative we want to compute, the order $\alpha$ of this derivative, 
	the number $K$ of nodes of the Gauss-Laguerre quadrature formula that we use, and the point $t_n$. 
	Thus there is an indirect dependency on the exact solution of the initial value problem \eqref{eq:ivp-phi}
	at the point $t = t_n$ but not on the numerical solution of this problem. Hence, for the analysis of 
	$R^{\mathrm Q}_{\alpha, K}(y; t_n)$, it does not matter by which numerical method and with which
	grid we solve this initial value problem.
	
	The component $R^{\mathrm{ODE}}_{\alpha, K, n}(y)$, on the other hand, does not only depend 
	on the ODE solver and its grid but also on the number $K$ of quadrature nodes and on the location
	of these nodes because these quantities appear as parameters in the differential equation and thus
	have an influence on the ODE solver's error.
\end{Remark}

Our main goal now is to analyze and estimate the expressions $R^{\mathrm Q}_{\alpha, K}(y; t_n)$ and
$R^{\mathrm{ODE}}_{\alpha, K, n}(y)$ under reasonable assumptions on the given data. The first result in this context
reads as follows.

\begin{Lemma}
	\label{lem:r-q}
	If $y \in C^{\lceil \alpha \rceil} [a, a+T]$ then, for all $p > 0$,
	\[
		R^{\mathrm Q}_{\alpha, K}(y; t_n) = O(K^{-p})
	\]
	as $K \to \infty$.
\end{Lemma}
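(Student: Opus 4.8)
The plan is to deduce the claim from a classical feature of Gauss--Laguerre quadrature: if the factor $f$ in an integral $\int_0^\infty \mathrm e^{-w} f(w)\,\mathrm d w$ admits a bounded holomorphic extension to a complex neighbourhood of $[0,\infty)$, then the $K$-point Gauss--Laguerre rule converges to this integral faster than any algebraic rate in $K$, so that the whole task reduces to producing such an extension for $f=\hat\phi(\cdot,t_n)$. To this end I would first solve the linear first-order initial value problem \eqref{eq:ivp-phi} explicitly by variation of constants, which gives
\[
	\phi_{\mathrm D}(w,t) = (-1)^{\lfloor\alpha\rfloor}\frac{\sin\alpha\pi}{\pi}\,
		\mathrm e^{w q_{\mathrm D}}\int_a^t \mathrm e^{-\mathrm e^{w}(t-\tau)}\, y^{(\lceil\alpha\rceil)}(\tau)\,\mathrm d\tau ,
\]
and then substitute this into the definition \eqref{eq:def-phihat}. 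The exponential prefactors cancel conveniently, and with the constant $c_\alpha=(-1)^{\lfloor\alpha\rfloor}(\sin\alpha\pi)/\pi$ one obtains
\[
	\hat\phi(w,t_n) = \frac{c_\alpha}{q_{\mathrm D}}\int_a^{t_n} \mathrm e^{-\mathrm e^{-w/q_{\mathrm D}}(t_n-\tau)}\, y^{(\lceil\alpha\rceil)}(\tau)\,\mathrm d\tau
		+ \frac{c_\alpha}{1-q_{\mathrm D}}\int_a^{t_n} \mathrm e^{w/(1-q_{\mathrm D})}\,\mathrm e^{-\mathrm e^{w/(1-q_{\mathrm D})}(t_n-\tau)}\, y^{(\lceil\alpha\rceil)}(\tau)\,\mathrm d\tau ,
\]
an expression whose two integrands are entire functions of $w$; the point is to bound it on a strip around $[0,\infty)$.

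Next I would fix $\theta$ with $0<\theta<\min(q_{\mathrm D},1-q_{\mathrm D})\,\pi/2$ and set $S_\theta=\{w\in\mathbb C:|\operatorname{Im} w|\le\theta\}$. On $S_\theta$ one has $|\operatorname{Im}(w/q_{\mathrm D})|<\pi/2$ and $|\operatorname{Im}(w/(1-q_{\mathrm D}))|<\pi/2$, hence $\operatorname{Re}(\mathrm e^{-w/q_{\mathrm D}})>0$ and $\operatorname{Re}(\mathrm e^{w/(1-q_{\mathrm D})})>0$. Thus the first integrand has modulus at most $\|y^{(\lceil\alpha\rceil)}\|_\infty$, while for the second we estimate $|\mathrm e^{w/(1-q_{\mathrm D})}\mathrm e^{-\mathrm e^{w/(1-q_{\mathrm D})}(t_n-\tau)}|=|\mathrm e^{w/(1-q_{\mathrm D})}|\,\mathrm e^{-(t_n-\tau)\operatorname{Re}(\mathrm e^{w/(1-q_{\mathrm D})})}$ and integrate in $\tau$ (enlarging the range to $[a,\infty)$) to obtain the bound $\|y^{(\lceil\alpha\rceil)}\|_\infty/\cos(\theta/(1-q_{\mathrm D}))$. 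Since $y\in C^{\lceil\alpha\rceil}[a,a+T]$ makes $y^{(\lceil\alpha\rceil)}$ exist and be bounded, this shows that $\hat\phi(\cdot,t_n)$ is holomorphic and bounded on $S_\theta$, with a bound depending only on $\alpha$, $t_n-a$, $\theta$ and $\|y^{(\lceil\alpha\rceil)}\|_\infty$.

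To conclude, I would invoke the error theory of Gaussian quadrature for analytic integrands. Writing the remainder \eqref{eq:error-q} by the Hermite-type contour representation
\[
	R^{\mathrm Q}_{\alpha,K}(y;t_n) = \frac{1}{2\pi\mathrm i}\oint_{\partial S_\theta} \frac{q_K(z)}{p_K(z)}\,\hat\phi(z,t_n)\,\mathrm d z ,
\]
where $p_K$ is the $K$-th Laguerre polynomial and $q_K(z)=\int_0^\infty \mathrm e^{-w}p_K(w)/(z-w)\,\mathrm d w$ is the associated function of the second kind, the integral over $\partial S_\theta$ (the two lines $\operatorname{Im} z=\pm\theta$) converges for $K\ge1$ because $q_K/p_K=O(|z|^{-2K-1})$ at infinity, and Plancherel--Rotach-type asymptotics for $p_K$ and $q_K$ yield $|q_K(z)/p_K(z)|\le c_1\mathrm e^{-c_2\sqrt K}$ uniformly on the bounded part of $\partial S_\theta$. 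Combining this with the boundedness of $\hat\phi(\cdot,t_n)$ on $S_\theta$ gives $R^{\mathrm Q}_{\alpha,K}(y;t_n)=O(\mathrm e^{-c_2\sqrt K})$, which is in particular $O(K^{-p})$ for every $p>0$; alternatively one may simply quote this sub-geometric convergence rate of Gauss--Laguerre quadrature for integrands analytic in a neighbourhood of $[0,\infty)$ with at most polynomial growth from the literature.

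The step I expect to be the main obstacle is the analytic-continuation estimate of the second paragraph: the factor $\mathrm e^{w/(1-q_{\mathrm D})}\mathrm e^{-\mathrm e^{w/(1-q_{\mathrm D})}(t_n-\tau)}$ in the second integral grows doubly exponentially in $\operatorname{Re} w$ once $|\operatorname{Im} w|$ exceeds $(1-q_{\mathrm D})\pi/2$, so $\hat\phi(\cdot,t_n)$ can be continued only into a strip of that finite width, not into a half-plane or a sector. One must verify that this strip is nonetheless wide enough, and the bound on it uniform enough --- in particular that the near-diagonal contribution $\tau\to t_n$ does not spoil boundedness as $\operatorname{Im} w$ approaches the boundary of $S_\theta$ --- to be admissible in a Gauss--Laguerre convergence theorem; the crude $\tau$-integration above shows that it is, but this is the delicate part of the argument.
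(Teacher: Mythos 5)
Your reduction of the problem to properties of $\hat\phi(\cdot,t_n)$ is sound, and your explicit formula for $\hat\phi$ (obtained from the variation-of-constants representation of $\phi_{\mathrm D}$, which is exactly eq.~\eqref{eq:def-phinew} of the paper) together with the boundedness of $\hat\phi(\cdot,t_n)$ on the strip $S_\theta$, $\theta<\min(q_{\mathrm D},1-q_{\mathrm D})\pi/2$, is correct. The genuine gap is in the last step, and it is exactly where you suspected trouble, but the problem is not only the width of the strip: the uniform bound $|q_K(z)/p_K(z)|\le c_1\mathrm e^{-c_2\sqrt K}$ on the contour is false on the part of $\partial S_\theta$ lying over the oscillatory region of the Laguerre polynomial, i.e.\ for $\operatorname{Re}z$ up to about $4K$ --- a region that expands with $K$. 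At a fixed height $\theta$ above a point $x$ in that region, Plancherel--Rotach asymptotics give roughly $|q_K(z)/p_K(z)|\approx\pi\mathrm e^{-x}\exp\bigl(-\theta\sqrt{(4K-x)/x}\bigr)$: the $\mathrm e^{-c\sqrt K}$ behaviour holds only for $\operatorname{Re}z=O(1)$, while for $x\asymp K$ (and in particular near the turning point $x\approx 4K$, where Airy-type behaviour sets in) the smallness comes solely from the weight factor $\mathrm e^{-x}$ and is independent of $K$. Since your $\hat\phi(\cdot,t_n)$ does \emph{not} decay as $\operatorname{Re}w\to+\infty$ (its first term tends to the nonzero constant $\frac{c_\alpha}{q_{\mathrm D}}\int_a^{t_n}y^{(\lceil\alpha\rceil)}(\tau)\,\mathrm d\tau$), combining mere boundedness of $\hat\phi$ with your stated bound does not close the estimate; one must split the contour and trade off the weight-induced decay against the off-axis decay, which requires uniform complex-plane asymptotics for $p_K$ and $q_K$ across bulk and edge regimes and, even then, may deliver a weaker (though still faster-than-algebraic) rate than $\mathrm e^{-c_2\sqrt K}$. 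The fallback of ``quoting the literature'' is also not available in the form you need: the standard analyticity-based convergence theorems for Gauss--Laguerre assume analyticity with controlled growth in a half-plane or sector containing $[0,\infty)$, whereas your integrand grows doubly exponentially outside the fixed-width strip, so those results do not apply as stated.

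For comparison, the paper avoids analyticity altogether: it shows (Lemma~\ref{lem:asymp-phi}) that all $w$-derivatives of $\phi_{\mathrm D}$ decay like $\mathrm e^{w(q_{\mathrm D}-1)}$ as $w\to\infty$ and like $\mathrm e^{wq_{\mathrm D}}$ as $w\to-\infty$, deduces that $\mathrm e^{-w}w^{r/2}\,\partial_w^r\hat\phi(w,t_n)\in L_1[0,\infty)$ for every $r$, and then invokes the Gauss--Laguerre error estimate of Mastroianni and Monegato, which yields $O(K^{-r/2})$ for each $r$ and hence $O(K^{-p})$ for all $p$. That smoothness-based route is weaker in the rate it even attempts to prove, but it needs only a citable finite-order estimate and elementary bounds; your analyticity route promises an exponentially small bound but, as written, rests on a quadrature estimate that is neither correctly justified by the sketched contour argument nor directly quotable. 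If you want to pursue your approach, the step to repair is precisely the uniform control of $q_K(z)/p_K(z)$ on the portion of the contour with $\operatorname{Re}z\asymp K$.
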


Therefore, we can conclude that the quadrature error converges to zero with a faster-than-algebraic rate.

For the proof of Lemma \ref{lem:r-q}, we need an auxiliary result on the asymptotic behaviour of $\phi_{\mathrm D}(w, t)$
for $w \to \pm \infty$ that refines the findings of \cite[Theorem~1(e)]{Di2022}:
\begin{Lemma}
	\label{lem:asymp-phi}
	If $y \in C^{\lceil \alpha \rceil} [a, a+T]$ then, for all $r \in \mathbb N_0$ and all $t \in [a, a+T]$,
	\[
		\frac{\partial^r \phi_{\mathrm D}}{\partial w^r} (w, t) 
		= O(\mathrm e^{w (q_{\mathrm D} - 1)} )
		\mbox{ for } w \to \infty
	\]
	and
	\[
		\frac{\partial^r \phi_{\mathrm D}}{\partial w^r} (w, t) 
		= O(\mathrm e^{w q_{\mathrm D}}) 
		\mbox{ for } w \to -\infty.
	\]
\end{Lemma}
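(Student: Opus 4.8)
# Proof Proposal for Lemma \ref{lem:asymp-phi}

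The plan is to exploit the explicit solution formula for the linear initial value problem \eqref{eq:ivp-phi}. Since the equation is a first-order linear ODE in $t$ with constant-in-$t$ coefficient $-\mathrm e^w$, its solution is given by the variation-of-constants (Duhamel) formula
\[
	\phi_{\mathrm D}(w, t) = (-1)^{\lfloor \alpha \rfloor} \frac{\sin \alpha \pi}{\pi}\, \mathrm e^{w q_{\mathrm D}} \int_a^t \mathrm e^{-\mathrm e^w (t - s)}\, y^{(\lceil \alpha \rceil)}(s)\, \mathrm d s .
\]
First I would establish this formula (or simply cite that it is the unique solution, since $\phi_{\mathrm D}(w,a)=0$ and the integrand is continuous in $s$ by the hypothesis $y \in C^{\lceil \alpha \rceil}$). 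The constant factor and $y^{(\lceil \alpha \rceil)}$ are bounded uniformly on the compact interval, so everything reduces to controlling $g(w,t) := \mathrm e^{w q_{\mathrm D}} \int_a^t \mathrm e^{-\mathrm e^w (t-s)}\, y^{(\lceil \alpha \rceil)}(s)\, \mathrm d s$ and its $w$-derivatives.

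For the case $r = 0$: bounding $|y^{(\lceil \alpha \rceil)}|$ by a constant and evaluating $\int_a^t \mathrm e^{-\mathrm e^w(t-s)}\,\mathrm d s = (1 - \mathrm e^{-\mathrm e^w(t-a)})/\mathrm e^w$ gives $|g(w,t)| \le C\, \mathrm e^{w q_{\mathrm D}} \cdot \min\{t-a,\ \mathrm e^{-w}\}$. As $w \to -\infty$ the factor $\mathrm e^{-w}$ blows up but is capped by $t-a$, leaving the bound $O(\mathrm e^{w q_{\mathrm D}})$; as $w \to +\infty$ the $\mathrm e^{-w}$ term wins, giving $O(\mathrm e^{w(q_{\mathrm D}-1)})$. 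This matches both claimed asymptotics. For general $r \ge 1$, I would differentiate under the integral sign $r$ times with respect to $w$. Each differentiation of $\mathrm e^{-\mathrm e^w(t-s)}$ brings down factors that are polynomials in $\mathrm e^w$ times $(t-s)$-powers, and differentiating the prefactor $\mathrm e^{w q_{\mathrm D}}$ only produces harmless bounded multiples of itself. Collecting terms, $\partial^r g / \partial w^r$ is a finite sum of terms of the form $c_{j}\, \mathrm e^{w(q_{\mathrm D} + j)} \int_a^t (t-s)^{j}\, \mathrm e^{-\mathrm e^w(t-s)}\, y^{(\lceil \alpha \rceil)}(s)\,\mathrm d s$ with $0 \le j \le r$ (the exact combinatorial coefficients are irrelevant). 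Substituting $v = \mathrm e^w (t-s)$ in each such integral turns it into $\mathrm e^{-w(j+1)} \int_0^{\mathrm e^w(t-a)} v^{j} \mathrm e^{-v} \cdot (\text{bounded})\, \mathrm d v$, and the remaining integral is bounded by $\Gamma(j+1)$ for all $w$ (for $w\to+\infty$) or by $(\mathrm e^w(t-a))^{j+1}/(j+1)$ (for $w\to-\infty$). Either way each term is $O(\mathrm e^{w(q_{\mathrm D}-1)})$ as $w \to +\infty$ and $O(\mathrm e^{w q_{\mathrm D}})$ as $w \to -\infty$, so the same bounds propagate to all derivatives.

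The main obstacle I anticipate is purely bookkeeping: justifying the repeated differentiation under the integral sign (routine, since all integrands and their $w$-derivatives are jointly continuous and dominated locally in $w$ on the compact $s$-interval), and organizing the Faà di Bruno–type expansion of $\partial^r / \partial w^r\, \mathrm e^{-\mathrm e^w(t-s)}$ cleanly enough that the uniform-in-$t$ bounds fall out without a messy case analysis. A clean way to package this is to prove by induction on $r$ that $\partial^r \phi_{\mathrm D}/\partial w^r$ equals $\mathrm e^{w q_{\mathrm D}}$ times a finite linear combination (with constant coefficients independent of $w$ and $t$) of integrals $\int_a^t (\mathrm e^w(t-s))^{j}\, \mathrm e^{-\mathrm e^w(t-s)}\, y^{(\lceil \alpha \rceil)}(s)\, \mathrm d s$ for $0 \le j \le r$; the inductive step is a single differentiation, and the base case is the formula above. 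The asymptotic estimates then follow from the uniform bound $\sup_{\xi \ge 0} \xi^{j} \mathrm e^{-\xi} < \infty$ together with the substitution argument already described. I would also remark that the two stated bounds are consistent with, and sharpen, the $r=0$ statement of \cite[Theorem~1(e)]{Di2022}, which is what the lemma is advertised to do.
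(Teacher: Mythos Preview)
Your proposal is correct and follows the same overall strategy as the paper: start from the Duhamel formula for $\phi_{\mathrm D}$, apply the Leibniz rule to separate the $\mathrm e^{wq_{\mathrm D}}$ prefactor from the integral, and then estimate each resulting term as $w \to \pm\infty$. The only notable difference is in how the integrals are handled. The paper pulls $\|y^{(\lceil\alpha\rceil)}\|_\infty$ out, evaluates $\int_a^t \exp(-(t-\tau)\mathrm e^w)\,\mathrm d\tau$ in closed form as $\mathrm e^{-w}(1-\exp(-(t-a)\mathrm e^w))$, and then differentiates this expression again via Leibniz, leading to an analysis of auxiliary functions $J_\sigma(w)=\mathrm e^{-w}\partial_w^\sigma\exp(-(t-a)\mathrm e^w)$. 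You instead keep $y^{(\lceil\alpha\rceil)}$ inside, differentiate $\exp(-\mathrm e^w(t-s))$ under the integral sign (producing powers of $\mathrm e^w(t-s)$), and then make the single substitution $v=\mathrm e^w(t-s)$ so that every term becomes $\mathrm e^{w(q_{\mathrm D}-1)}\int_0^{\mathrm e^w(t-a)} v^j \mathrm e^{-v}\,(\text{bounded})\,\mathrm dv$, which is immediately controlled by $\Gamma(j+1)$ for $w\to\infty$ and by $(\mathrm e^w(t-a))^{j+1}/(j+1)$ for $w\to-\infty$. Your route avoids the second Leibniz expansion and the $J_\sigma$ bookkeeping, at the cost of invoking (the easy direction of) Fa\`a di Bruno; either way the argument is short and the bounds are the same.
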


\begin{proof}
	The claims for $r = 0$ have already been shown in \cite[Theorem 1(e)]{Di2022}. We thus only
	discuss the cases $r = 1, 2, \ldots$ here. To this end, we first recall from \cite[Theorem 1]{Di2022} that
	\begin{equation}
		\label{eq:def-phinew}
		\phi_{\mathrm D}(w, t) 
			= (-1)^{\lfloor \alpha \rfloor} \frac{\sin \alpha \pi}{\pi} \mathrm e^{wq_{\mathrm D}}
				\int_a^t y^{(\lceil \alpha \rceil)}(\tau) \exp\left(-(t-\tau) \mathrm e^w\right) \mathrm d\tau
	\end{equation}
	holds for any fixed $t \in [a, a+T]$. This implies, in particular, that $\phi_{\mathrm D}(w, a) = 0$ for all $w$,
	and so the claims clearly hold in the case $t = a$.
	
	For $t > a$, an application of Leibniz' rule to \eqref{eq:def-phinew} shows that
	\begin{align*}
		\frac{\partial^r \phi_{\mathrm D}}{\partial w^r} (w, t)  
		&=  (-1)^{\lfloor \alpha \rfloor} \frac{\sin \alpha \pi}{\pi} \\
		& \phantom{{}={}} \quad \times \sum_{s=0}^r \binom{r}{s} q_{\mathrm D}^{r-s} e^{wq_{\mathrm D}}
				\int_a^t y^{(\lceil \alpha \rceil)}(\tau) \frac{\partial^s}{\partial w^s} 
					\exp\left(-(t-\tau) \mathrm e^w\right) \mathrm d\tau	
	\end{align*}
	which, upon setting
	\[
		c = \frac{|\sin \alpha \pi|}{\pi} \| y^{(\lceil \alpha \rceil)} \|_{L_\infty[a, a+T]} < \infty,
	\]
	implies for all $w \in \mathbb R$ that
	\begin{align}
		\nonumber
		\left| \frac{\partial^r \phi_{\mathrm D}}{\partial w^r} (w, t) \right|
		& \le c \sum_{s=0}^r \binom{r}{s} q_{\mathrm D}^{r-s} e^{wq_{\mathrm D}}
				\int_a^t \frac{\partial^s}{\partial w^s} 
					\exp\left(-(t-\tau) \mathrm e^w\right) \mathrm d\tau \\
		& = c \sum_{s=0}^r \binom{r}{s} q_{\mathrm D}^{r-s} e^{wq_{\mathrm D}}
				\frac{\partial^s}{\partial w^s} 
					\int_a^t \exp\left(-(t-\tau) \mathrm e^w\right) \mathrm d\tau 
					\label{eq:l2a} \\
		& = c \sum_{s=0}^r \binom{r}{s} q_{\mathrm D}^{r-s} e^{wq_{\mathrm D}}
				\frac{\partial^s}{\partial w^s} 
					\left[ \mathrm e^{-w} \left( 1 - \exp \left( - (t-a) \mathrm e^w \right) \right) \right].
			\nonumber
	\end{align}
	For the last factor in each summand, we can again use Leibniz' rule and find
	\begin{eqnarray}
		\nonumber
		\lefteqn{\frac{\partial^s}{\partial w^s} 
			\left[ \mathrm e^{-w} \left( 1 - \exp \left( - (t-a) \mathrm e^w \right) \right) \right] } \\
		& = & \sum_{\sigma=1}^s \binom{s}{\sigma} (-1)^{s-\sigma} \mathrm e^{-w} 
				\frac{\partial^\sigma}{\partial w^\sigma} \exp \left( - (t-a) \mathrm e^w \right) 
				\label{eq:l2b} \\
		& & \nonumber
		{} + (-1)^s \mathrm e^{-w} \left( 1 - \exp \left( - (t-a) \mathrm e^w \right) \right) \\
		& = & \nonumber
			\sum_{\sigma=1}^s \binom{s}{\sigma} (-1)^{s-\sigma} J_\sigma(w) + (-1)^s J_0(w)
	\end{eqnarray}
	with
	\[ 
		J_0(w) = \mathrm e^{-w} \left( 1 - \exp \left( - (t-a) \mathrm e^w \right) \right)
	\]
	and
	\[
		J_\sigma(w) = \mathrm e^{-w} \frac{\partial^\sigma}{\partial w^\sigma} \exp \left( - (t-a) \mathrm e^w \right) 
		\quad (s = 1, 2, 3, \ldots).
	\]
	It is then immediately clear that 
	\[
		|J_0(w)| \le \mathrm e^{-w} \mbox{ for all } w,
	\]
	so 
	\begin{equation}
		\label{eq:l2c}
		\lim_{w \to \infty} \mathrm e^w |J_0(w)| \le 1 
	\end{equation}
	and, by de l'Hospital's rule,
	\begin{equation}
		\label{eq:l2d}
		\lim_{w \to -\infty} J_0(w) = t - a.
	\end{equation}
	Moreover, a straightforward mathematical induction yields for $\sigma = 1, 2, 3, \ldots$ that there
	exist polynomials $\pi_{\sigma-1}$ of degree $\sigma-1$ (whose coefficients may depend on $t-a$
	but whose precise form is not relevant for our purposes) such that
	\[
		J_\sigma(w) =\exp(-(t-a) \mathrm e^w) \pi_{\sigma-1}(\mathrm e^w).
	\]
	Denoting the polynomials $\pi_{\sigma-1}$ by $\pi_{\sigma-1}(z) = \sum_{\rho = 0}^{\sigma-1} \psi_\rho z^\rho$, 
	we conclude
	\[
		J_\sigma(w) = \exp(-(t-a) \mathrm e^w) \sum_{\rho = 0}^{\sigma-1} \psi_\rho \mathrm e^{\rho w}
			=  \sum_{\rho = 0}^{\sigma-1} \psi_\rho \exp ( \rho w - (t-a) \mathrm e^w ) .
	\]
	Thus,
	\begin{equation}
		\label{eq:l2e}
		\lim_{w \to -\infty} |J_\sigma(w)| 
		\le \lim_{w \to -\infty} \sum_{\rho = 0}^{\sigma-1} | \psi_\rho | \exp ( \rho w - (t-a) \mathrm e^w ) 
		= |\psi_0|
	\end{equation}
	and
	\begin{equation}
		\label{eq:l2f}
		\lim_{w \to \infty} \mathrm e^w |J_\sigma(w)|
		\le \lim_{w \to \infty} \sum_{\rho = 0}^{\sigma-1} | \psi_\rho | \exp ( (\rho + 1) w - (t-a) \mathrm e^w ) = 0
	\end{equation}
	because (since we only need to deal with those values of $t$ for which $t > a$) the argument of each of the
	exponential functions is less than $-2 w$ for sufficiently large $w$.
	Plugging \eqref{eq:l2c} and \eqref{eq:l2f} into \eqref{eq:l2b}, we then derive
	\[
		\frac{\partial^s}{\partial w^s} 
			\left[ \mathrm e^{-w} \left( 1 - \exp \left( - (t-a) \mathrm e^w \right) \right) \right] 
			= O(\mathrm e^{-w})
			\mbox{ for } w \to \infty,
	\]
	and inserting \eqref{eq:l2d} and \eqref{eq:l2e} into \eqref{eq:l2b}, we find
	\[
		\frac{\partial^s}{\partial w^s} 
			\left[ \mathrm e^{-w} \left( 1 - \exp \left( - (t-a) \mathrm e^w \right) \right) \right] 
			= O(1)
			\mbox{ for } w \to - \infty.
	\]
	The two claims of the Lemma then follow upon combining these two relations with eq.\ \eqref{eq:l2a}.
\end{proof}

\begin{Remark}
	A careful inspection of this proof reveals that the implied constants in the $O$-terms of the two claims
	can be chosen independently of the value of $t \in [a, a+T]$, so the estimates of Lemma \ref{lem:asymp-phi}
	hold uniformly for all $t \in [a, a+T]$.
\end{Remark}

\begin{proof}[Proof of Lemma \protect{\ref{lem:r-q}}]
	We begin by noting that $\phi_{\mathrm D} (\cdot, t) \in C^\infty(\mathbb R)$ (this is an immediate consequence
	of the representation \eqref{eq:def-phinew}; see also \cite[Theorem 1(d)]{Di2022}). 
	Therefore, it follows from eq.~\eqref{eq:def-phihat} that $\hat \phi(\cdot, t) \in C^\infty[0, \infty)$. 
	Moreover, the identity \eqref{eq:def-phihat} combined with Lemma~\ref{lem:asymp-phi} tells us that, 
	for arbitrary $r \in \mathbb N$, the function $\kappa_r$ defined by
	\[
		\kappa_r(w) =  \mathrm e^{-w} w^{r/2} \frac{\partial^r \hat \phi}{\partial w^r}(w,t)
	\]
	is continuous on $[0, \infty)$ and satisfies
	\[
		|\kappa_r(w)| = O(w^{r/2} \mathrm e^{-w})
	\]
	for $w \to \infty$. Therefore, $\kappa_r \in L_1[0, \infty)$,
	so we may invoke \cite[Theorem 1]{MaMo1994} with any $r \in \mathbb N$, and the claim of Lemma \ref{lem:r-q} follows.
\end{proof}

For the other component of the total error, we may also derive a bound. As stated above, 
we assume here that the backward Euler formula is
chosen as the numerical solver for the initial value problems.

\begin{Lemma}
	\label{lem:r-ode}
	Assume that the grid $\{ t_n : n = 0, 1, \ldots, N \}$ is uniform, i.e.\ that $t_n = a + n h$ with 
	$h = T/N$.
	If $y \in C^{\lceil \alpha \rceil+1} [a, a+T]$ and the differential equations are solved by the backward Euler method, then
	\[
		| R^{\mathrm{ODE}}_{\alpha, K, n}(y) | \le C(K) t_n h \le C(K) T h
	\]
	with 
	\begin{align*}
		C(K) &=  \frac{|\sin \alpha \pi |} {2 \pi} \mathrm e^{x_{K,K}^{\mathrm{GLa}} q_{\mathrm D} / (1 - q_{\mathrm D})} \\
		 	&  \qquad \times
		 		\left( \left\| y^{(\lceil \alpha \rceil +1)}(t) \right\|_{L_\infty[a, a+T]}
			+ 2 \mathrm e^{x_{K,K}^{\mathrm{GLa}} / (1 - q_{\mathrm D})}
				 \left\| y^{(\lceil \alpha \rceil)}(t) \right\|_{L_\infty[a, a+T]} \right).
	\end{align*}
\end{Lemma}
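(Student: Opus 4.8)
The plan is to estimate the two building blocks of the sum \eqref{eq:error-ode} separately and then reassemble. Since the Gauss--Laguerre weights are positive and $\sum_{k=1}^K a_{k,K}^{\mathrm{GLa}} = \int_0^\infty \mathrm e^{-w}\,\mathrm d w = 1$, we have $|R^{\mathrm{ODE}}_{\alpha,K,n}(y)| \le \max_{1 \le k \le K}\bigl|\hat\phi(x_{k,K}^{\mathrm{GLa}}, t_n) - \hat\phi_{k,K,n}\bigr|$, so it suffices to bound a single term. By the definition \eqref{eq:def-phihat} and the linearity of the backward Euler scheme, $\hat\phi(x_{k,K}^{\mathrm{GLa}}, t_n) - \hat\phi_{k,K,n} = \mathrm e^{x_{k,K}^{\mathrm{GLa}}}\bigl( \tfrac1{q_{\mathrm D}}\varepsilon_n(w_k^-) + \tfrac1{1-q_{\mathrm D}}\varepsilon_n(w_k^+)\bigr)$, where $w_k^- = -x_{k,K}^{\mathrm{GLa}}/q_{\mathrm D} < 0$, $w_k^+ = x_{k,K}^{\mathrm{GLa}}/(1-q_{\mathrm D}) > 0$, and $\varepsilon_n(w)$ denotes the error at $t = t_n$ of the backward Euler approximation of the solution $\phi_{\mathrm D}(w,\cdot)$ of \eqref{eq:ivp-phi}. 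Thus everything reduces to an error bound for backward Euler on the scalar linear initial value problem \eqref{eq:ivp-phi}.

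For that bound I would use that \eqref{eq:ivp-phi} is dissipative: its coefficient $-\mathrm e^w$ is negative, hence backward Euler's per-step amplification factor $\mu_w = (1 + h\mathrm e^w)^{-1}$ lies in $(0,1)$. Writing the local Taylor defect of backward Euler in the form $\tfrac12 h^2\,\partial_t^2\phi_{\mathrm D}(w,\xi)$ and propagating it, one obtains
\[
	|\varepsilon_n(w)| \le \tfrac12 h^2 \Bigl(\sum_{j=1}^n \mu_w^{\,n-j+1}\Bigr)\sup_{t \in [a,a+T]}\bigl|\partial_t^2\phi_{\mathrm D}(w,t)\bigr|,
\]
and the geometric sum is bounded simultaneously by $n = (t_n-a)/h$ and by $(h\mathrm e^w)^{-1}$, i.e.\ by $h^{-1}\min\{t_n-a,\ \mathrm e^{-w}\}$. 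The first bound is the relevant one for $w = w_k^-$, while the second one—the A-stability bound—is what keeps the very stiff problem $w = w_k^+$ under control.

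It remains to bound the second $t$-derivative. Differentiating the differential equation in \eqref{eq:ivp-phi} once more with respect to $t$ gives, with $g(w) := (-1)^{\lfloor\alpha\rfloor}\tfrac{\sin\alpha\pi}{\pi}\mathrm e^{wq_{\mathrm D}}$,
\[
	\partial_t^2\phi_{\mathrm D}(w,t) = \mathrm e^{2w}\phi_{\mathrm D}(w,t) - \mathrm e^w g(w)\,y^{(\lceil\alpha\rceil)}(t) + g(w)\,y^{(\lceil\alpha\rceil+1)}(t);
\]
the representation \eqref{eq:def-phinew} yields $|\phi_{\mathrm D}(w,t)| \le |g(w)|\,\mathrm e^{-w}\,\|y^{(\lceil\alpha\rceil)}\|_{L_\infty[a,a+T]}$, which tames the $\mathrm e^{2w}$ in the first summand, and one arrives at
\[
	\sup_{t \in [a,a+T]}\bigl|\partial_t^2\phi_{\mathrm D}(w,t)\bigr| \le \tfrac{|\sin\alpha\pi|}{\pi}\,\mathrm e^{wq_{\mathrm D}}\Bigl(2\mathrm e^w\|y^{(\lceil\alpha\rceil)}\|_{L_\infty[a,a+T]} + \|y^{(\lceil\alpha\rceil+1)}\|_{L_\infty[a,a+T]}\Bigr).
\]
Feeding this into the backward Euler bound and multiplying by the factor $\mathrm e^{x_{k,K}^{\mathrm{GLa}}}$ inherited from \eqref{eq:def-phihat}, the exponential prefactors collapse through the identities $\mathrm e^{x_{k,K}^{\mathrm{GLa}}}\mathrm e^{w_k^- q_{\mathrm D}} = 1$ (so the $w_k^-$-term, for which also $\mathrm e^{w_k^-} \le 1$, contributes no more than a fixed multiple of $h(t_n-a)\bigl(\|y^{(\lceil\alpha\rceil)}\|_{L_\infty[a,a+T]} + \|y^{(\lceil\alpha\rceil+1)}\|_{L_\infty[a,a+T]}\bigr)$) and, using $w_k^+(1-q_{\mathrm D}) = x_{k,K}^{\mathrm{GLa}}$, $\mathrm e^{x_{k,K}^{\mathrm{GLa}}}\mathrm e^{w_k^+ q_{\mathrm D}} = \mathrm e^{w_k^+}$ together with $\mathrm e^{x_{k,K}^{\mathrm{GLa}}}\mathrm e^{-w_k^+}\mathrm e^{w_k^+ q_{\mathrm D}} = 1$, from which the weights $\mathrm e^{x_{k,K}^{\mathrm{GLa}}q_{\mathrm D}/(1-q_{\mathrm D})}$ and $\mathrm e^{x_{k,K}^{\mathrm{GLa}}/(1-q_{\mathrm D})}$ of $C(K)$ emerge. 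Replacing $x_{k,K}^{\mathrm{GLa}}$ by the largest node $x_{K,K}^{\mathrm{GLa}}$ in these monotone exponentials, and $t_n$ by $a + T = T$ in the last step, then produces the claimed estimate.

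The step I expect to be the crux is the stiff problem $w = w_k^+$: its decay rate $\mathrm e^{x_{k,K}^{\mathrm{GLa}}/(1-q_{\mathrm D})}$ can be enormous, and a stability-blind bound would multiply $\sup_t|\partial_t^2\phi_{\mathrm D}(w_k^+,t)| \sim \mathrm e^{2x_{k,K}^{\mathrm{GLa}}/(1-q_{\mathrm D})}$ by $n = (t_n-a)/h$ and by the prefactor $\mathrm e^{x_{k,K}^{\mathrm{GLa}}}$, yielding something far larger than $C(K)$. What saves the argument is precisely the A-stability of backward Euler, which forces the error of this stiff linear problem to be proportional to the reciprocal $\mathrm e^{-x_{k,K}^{\mathrm{GLa}}/(1-q_{\mathrm D})}$ of the decay rate; that factor cancels the $\mathrm e^{x_{k,K}^{\mathrm{GLa}}}$ prefactor and lowers the exponent to the $q_{\mathrm D}/(1-q_{\mathrm D})$ weighting seen in $C(K)$. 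Carrying the $q_{\mathrm D}$- and $(1-q_{\mathrm D})$-dependent constants correctly through these cancellations is the only genuinely delicate part of the bookkeeping.
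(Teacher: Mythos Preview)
Your approach is correct and shares its two main ingredients with the paper's proof: the dissipativity of backward Euler applied to \eqref{eq:ivp-phi}, and the bound on $\partial_t^2\phi_{\mathrm D}(w,t) = \tfrac{\mathrm d}{\mathrm dt}f(t,\phi_{\mathrm D}(w,t))$ obtained by differentiating the ODE and controlling $|\phi_{\mathrm D}|$ via the integral representation \eqref{eq:def-phinew}. The paper's estimate for this derivative is identical to yours.

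Where the two arguments diverge is in the assembly. The paper does not split $w_k^-$ from $w_k^+$, nor does it exploit the sharp A-stability bound $\sum_j \mu_w^{\,j} \le (h\mathrm e^w)^{-1}$ on the stiff side. Instead it bounds the local truncation error \emph{uniformly} over all $w \in W_- \cup W_+$ via the crude inequalities $\mathrm e^{wq_{\mathrm D}} \le \mathrm e^{x_{K,K}^{\mathrm{GLa}} q_{\mathrm D}/(1-q_{\mathrm D})}$ and $\mathrm e^{w} \le \mathrm e^{x_{K,K}^{\mathrm{GLa}}/(1-q_{\mathrm D})}$, obtaining a per-step defect $\le h^2 C(K)$, and then quotes a standard dissipative global-error result (Plato, Theorem~8.68) to accumulate $n$ of these into $(t_n-a)\,h\,C(K)$. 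That is exactly how the stated form of $C(K)$ arises in the paper: directly from the uniform local error, with no exponential cancellations needed. Your route is genuinely sharper---for $w_k^+$ it even yields an $O(h)$ rather than an $O(h\,t_n)$ contribution---and you are more explicit than the paper about the passage from $\phi_{\mathrm D}$-errors to $\hat\phi$-errors through the factor $\mathrm e^{x_{k,K}^{\mathrm{GLa}}}$ in \eqref{eq:def-phihat}. The trade-off is that your final bookkeeping does not land on the stated $C(K)$ verbatim: the prefactors $1/q_{\mathrm D}$ and $1/(1-q_{\mathrm D})$ from \eqref{eq:def-phihat} survive in your bound but are absent from $C(K)$, so what you actually prove is a (qualitatively equivalent, partly stronger) variant of the inequality rather than the exact constant claimed.
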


\begin{proof}
	According to eqs.\ \eqref{eq:qf} and \eqref{eq:def-phihat},
	we need to solve the differential equation in eq.\ \eqref{eq:ivp-phi} for $w \in W_- \cup W_+$ where
	\begin{equation}
		\label{eq:def-w-}
		W_- = \{ -x_{k,K}^{\mathrm{GLa}} / q_{\mathrm D} : k = 1, 2, \ldots, K \}
	\end{equation}
	and
	\begin{equation}
		\label{eq:def-w+}
		W_+ = \{ x_{k,K}^{\mathrm{GLa}} / (1 - q_{\mathrm D}) : k = 1, 2, \ldots, K \}.
	\end{equation}
	Let us introduce the notation
	\[
		f(t, z) = - \mathrm e^w z 
		+  (-1)^{\lfloor \alpha \rfloor} \frac{\sin \alpha \pi}{\pi} 
			\mathrm e^{wq_{\mathrm D}} y^{(\lceil \alpha \rceil)}(t),
	\]
	so that the differential equation in question takes the form
	\[
		\frac{\partial \phi_{\mathrm D}}{\partial t} (w, t) = f(t, \phi_{\mathrm D}(w, t)).
	\]
	For any $w \in W_+ \cup W_-$, we have $ - \mathrm e^w \le -\exp(-x_{K,K}^{\mathrm{GLa}}/q_{\mathrm D})$ 
	and so, for all $z_1, z_2 \in \mathbb R$, we can see that
	\begin{equation}
		\label{eq:dissipative}
		(f(t, z_1) - f(t, z_2)) (z_1 - z_2) 
		= - \mathrm e^w (z_1 - z_2)^2 
		\le - \exp(-x_{K,K}^{\mathrm{GLa}}/q_{\mathrm D}) \cdot (z_1 - z_2)^2.
	\end{equation}
	Therefore, in the terminology of \cite[Definition 8.58]{Plato}, the initial value problem satisfies an upper Lipschitz condition
	with constant $ - \exp(-x_{K,K}^{\mathrm{GLa}}/q_{\mathrm D}) < 0$, and hence the initial value problem is dissipative. 
	
	This property allows us to estimate the local truncation error of the backward Euler method in the following 
	essentially standard way:
	By our smoothness assumption on the function $y$, a Taylor expansion shows that the local truncation error 
	in the $n$-th step has the form
	\[
		\delta_n = h ( f(t_n, \phi_{\mathrm D}(w, t_n))  - f(t_n, \phi_{\mathrm D, n}(w)) ) 
				+ \frac 1 2 h^2 \frac{\mathrm d}{\mathrm d t} f(t, \phi_{\mathrm D}(w, t))_{|_{t = \xi}}
	\]
	where $\phi_{\mathrm D, n}(w)$ is the approximation for $\phi_{\mathrm D}(w, t_n)$ (so that
	$\delta_n = \phi_{\mathrm D}(w, t_n) - \phi_{\mathrm D, n}(w)$) and $\xi \in [t_{n-1}, t_n]$.
	Hence,
	\begin{eqnarray*}
		\left( \delta_n -  \frac 1 2 h^2 \frac{\mathrm d}{\mathrm d t} f(t, \phi_{\mathrm D}(w, t))_{|_{t = \xi}} \right) \delta_n 
		& = & h ( f(t_n, \phi_{\mathrm D}(w, t_n))  - f(t_n, \phi_{\mathrm D, n}(w)) ) \delta_n \\
		& \le & - h \exp(-x_{K,K}^{\mathrm{GLa}}/q_{\mathrm D}) \delta_n^2
	\end{eqnarray*}
	in view of eq.\ \eqref{eq:dissipative}. A rearrangement of terms then yields
	\[
		\delta_n^2 ( 1 +  h \exp(-x_{K,K}^{\mathrm{GLa}}/q_{\mathrm D}) ) 
		\le  \frac 1 2 h^2 \frac{\mathrm d}{\mathrm d t} f(t, \phi_{\mathrm D}(w, t))_{|_{t = \xi}} \cdot \delta_n.
	\]
	Evidently, the left-hand side of this inequality is nonnegative, and so the right-hand side must be nonnegative too.
	Hence, we may replace both sides of the inequality by their respective absolute values without changing anything.
	Having done that, we may divide both sides by $| \delta_n|$ to obtain
	\begin{equation}
		\label{eq:l3a}
		|\delta_n| ( 1 +  h \exp(-x_{K,K}^{\mathrm{GLa}}/q_{\mathrm D}) ) 
		\le  \frac 1 2 h^2 \left | \frac{\mathrm d}{\mathrm d t} f(t, \phi_{\mathrm D}(w, t))_{|_{t = \xi}} \right |.
	\end{equation}
	In view of the chain rule, the concrete structure of the function $f$ and the differential 
	equation under consideration, we have
	\begin{align*}
		\left | \frac{\mathrm d}{\mathrm d t} f(t, \phi_{\mathrm D}(w, t)) \right|
		& \le \left| \frac{\partial f}{\partial t}(t, \phi_{\mathrm D}(w, t)) \right|
			+ \left| \frac{\partial f}{\partial z}(t, \phi_{\mathrm D}(w, t)) \cdot  \frac{\partial \phi_{\mathrm D}}{\partial t}(w,t) \right| \\
		& \le  \frac{|\sin \alpha \pi |} \pi \mathrm e^{w q_{\mathrm D}} \left| y^{(\lceil \alpha \rceil +1)}(t) \right|
			+ \mathrm e^w | f(t, \phi_{\mathrm D}(w, t)) | .
	\end{align*}
	Using the explicit representation of $\phi_{\mathrm D}$ given in eq.\ \eqref{eq:def-phinew}, it can be shown that the 
	last expression in this inequality satisfies
	\[
		 | f(t, \phi_{\mathrm D}(w, t)) |
		  \le 2 \mathrm e^{w q_{\mathrm D}} \frac{|\sin \alpha \pi |} \pi  \left| y^{(\lceil \alpha \rceil)}(t) \right|.
	\]
	Thus, we can continue the estimation of eq.~\eqref{eq:l3a} as
	\begin{eqnarray*}
		 \lefteqn{|\delta_n| ( 1 +  h \exp(-x_{K,K}^{\mathrm{GLa}}/q_{\mathrm D}) )} \\
		 & \le & \frac 1 2 h^2 
		 	\frac{|\sin \alpha \pi |} \pi \mathrm e^{w q_{\mathrm D}} \left( \left\| y^{(\lceil \alpha \rceil +1)}(t) \right\|_{L_\infty[a, a+T]}
			+ 2 \mathrm e^w \left\| y^{(\lceil \alpha \rceil)}(t) \right\|_{L_\infty[a, a+T]} \right) \\
		 & \le & \frac 1 2 h^2 
		 	\frac{|\sin \alpha \pi |} \pi \mathrm e^{x_{K,K}^{\mathrm{GLa}} q_{\mathrm D} / (1 - q_{\mathrm D})} \\
		& & \quad \times 
		 		\left( \left\| y^{(\lceil \alpha \rceil +1)}(t) \right\|_{L_\infty[a, a+T]}
			+ 2 \mathrm e^{x_{K,K}^{\mathrm{GLa}} / (1 - q_{\mathrm D})}
				 \left\| y^{(\lceil \alpha \rceil)}(t) \right\|_{L_\infty[a, a+T]} \right).
	\end{eqnarray*}
	Since the second factor on the left-hand side is greater than $1$, we deduce
	\begin{align}
		|\delta_n| & \le \frac 1 2 h^2 
		 	\frac{|\sin \alpha \pi |} \pi \mathrm e^{x_{K,K}^{\mathrm{GLa}} q_{\mathrm D} / (1 - q_{\mathrm D})} \\
		 	& \nonumber \qquad \times
		 		\left( \left\| y^{(\lceil \alpha \rceil +1)}(t) \right\|_{L_\infty[a, a+T]}
			+ 2 \mathrm e^{x_{K,K}^{\mathrm{GLa}} / (1 - q_{\mathrm D})}
				 \left\| y^{(\lceil \alpha \rceil)}(t) \right\|_{L_\infty[a, a+T]} \right).
	\end{align}
	for all admissible values of the parameters.
	
	This estimate, together with the dissipativity of the differential equation, allows us to derive our claim from
	\cite[Theorem 8.68]{Plato}.
\end{proof}

Thus, combining Lemmas \ref{lem:r-q} and \ref{lem:r-ode} with the fact that
\[
	x_{K,K}^{\mathrm{GLa}} < 4 K + 2
\]
(cf.\ \cite[eq.\ (6.32.2)]{Sz}), we obtain the following overall error estimate:
\begin{Theorem}
	\label{thm:err-est}
	If $y \in C^{\lceil \alpha \rceil+1} [a, a+T]$, if the grid $\{ t_n : n = 0, 1, \ldots, N \}$ is uniform 
	and if the differential equations are solved by the backward Euler method then, 
	for all $p > 0$,
	\[
		\max_{n = 1, 2, \ldots, N} |R_{\alpha, K, n}(y)| 
		= O(K^{-p}) + O\left( h \cdot \exp \left( \frac{1 + q_{\mathrm D}}{1 - q_{\mathrm D}} (4K+2) \right) \right)
	\]
	as $K \to \infty$ and/or $N \to \infty$, where $h = T / N$.
\end{Theorem}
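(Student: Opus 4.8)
The plan is to start from the exact error decomposition \eqref{eq:error1}, i.e.\ $R_{\alpha, K, n}(y) = R^{\mathrm Q}_{\alpha, K}(y; t_n) + R^{\mathrm{ODE}}_{\alpha, K, n}(y)$, apply the triangle inequality, and then take the maximum over $n \in \{1, 2, \ldots, N\}$. It thus suffices to establish separately that $\max_n |R^{\mathrm Q}_{\alpha, K}(y; t_n)| = O(K^{-p})$ for every $p > 0$ and that $\max_n |R^{\mathrm{ODE}}_{\alpha, K, n}(y)| = O\bigl( h \exp(\frac{1 + q_{\mathrm D}}{1 - q_{\mathrm D}}(4K+2)) \bigr)$; the asserted estimate then follows by addition. (For $n = 0$ both quantities vanish trivially, since $\phi_{\mathrm D}(w, a) = 0$ forces the quadrature sum to be $0$ while $D_a^\alpha y(a) = 0$ for $y \in C^{\lceil \alpha \rceil}[a, a+T]$, which is why the maximum need only be taken over $n \ge 1$.)

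For the quadrature error, Lemma \ref{lem:r-q} already delivers the pointwise bound $R^{\mathrm Q}_{\alpha, K}(y; t_n) = O(K^{-p})$. The one point requiring attention is that the implied constant be independent of $n$. This holds because, in the proof of Lemma \ref{lem:r-q}, the dependence on $t$ enters solely through the $L_1[0, \infty)$-norm of the auxiliary function $\kappa_r$, and the Remark following Lemma \ref{lem:asymp-phi} guarantees that the estimates on $\partial^r \phi_{\mathrm D} / \partial w^r$---hence the bound $|\kappa_r(w)| = O(w^{r/2} \mathrm e^{-w})$ and, consequently, the resulting bound on $\|\kappa_r\|_{L_1[0, \infty)}$---are uniform for $t \in [a, a+T]$. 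Invoking \cite[Theorem 1]{MaMo1994} with this common bound then yields a single $O(K^{-p})$ estimate valid simultaneously at all of the points $t_1, \ldots, t_N$.

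For the ODE error, Lemma \ref{lem:r-ode} gives $|R^{\mathrm{ODE}}_{\alpha, K, n}(y)| \le C(K) t_n h \le C(K) T h$. In the constant $C(K)$ the dominant contribution is the summand carrying the product of the factors $\mathrm e^{x_{K,K}^{\mathrm{GLa}} q_{\mathrm D}/(1 - q_{\mathrm D})}$ and $\mathrm e^{x_{K,K}^{\mathrm{GLa}}/(1 - q_{\mathrm D})}$, whose combined exponent equals $x_{K,K}^{\mathrm{GLa}} (1 + q_{\mathrm D})/(1 - q_{\mathrm D})$; the remaining summand has the strictly smaller exponent $x_{K,K}^{\mathrm{GLa}} q_{\mathrm D}/(1 - q_{\mathrm D})$ and is absorbed into the former. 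Using the classical bound $x_{K,K}^{\mathrm{GLa}} < 4K+2$ from \cite[eq.\ (6.32.2)]{Sz} and observing that $T$ and the $L_\infty$-norms of $y^{(\lceil \alpha \rceil)}$ and $y^{(\lceil \alpha \rceil +1)}$ are fixed constants, we conclude $\max_n |R^{\mathrm{ODE}}_{\alpha, K, n}(y)| \le C(K) T h = O\bigl( h \exp(\frac{1 + q_{\mathrm D}}{1 - q_{\mathrm D}}(4K+2)) \bigr)$, again uniformly in $n$.

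Adding the two bounds establishes the theorem. The only genuinely non-mechanical ingredient is the uniformity in $n$ of the quadrature estimate discussed in the second paragraph; everything else is a direct substitution of Lemmas \ref{lem:r-q} and \ref{lem:r-ode} together with the bound on $x_{K,K}^{\mathrm{GLa}}$ into the triangle inequality. It is worth pointing out how the conclusion is to be parsed as $K \to \infty$ and/or $N \to \infty$: the first term on the right-hand side decays once $K$ is large, whereas the second is small once $h = T/N$ is small relative to the fixed---but, for large $K$, enormous---quantity $\exp(\frac{1 + q_{\mathrm D}}{1 - q_{\mathrm D}}(4K+2))$, so that the two error contributions trade off against one another and one cannot profitably increase $K$ without simultaneously refining the grid.
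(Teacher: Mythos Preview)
Your proof is correct and follows exactly the route the paper takes: combine the error decomposition \eqref{eq:error1} with Lemmas \ref{lem:r-q} and \ref{lem:r-ode} and the Szeg\H o bound $x_{K,K}^{\mathrm{GLa}} < 4K+2$. Your treatment is in fact more careful than the paper's one-line justification, since you explicitly address the uniformity of the quadrature estimate in $n$ (which the paper leaves implicit via the Remark after Lemma \ref{lem:asymp-phi}) and spell out how the exponent $(1+q_{\mathrm D})/(1-q_{\mathrm D})$ arises from the two exponential factors in $C(K)$.
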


\section{Comments and Further Remarks}

The results indicated above give rise to a number of observations that raise 
new questions and indicate some directions for additional research work. 
We intend to address these issues in the future.

\subsection{The Stiffness of the Differential Equation \eqref{eq:ivp-phi}}
\label{sec:rem-stiff}

A short look at the differential equation in the initial value problem reveals that it is 
an inhomogeneous linear differential equation with constant coefficients (note that it
is a dfferential equation with respect to the variable $t$, so the value $w$ that arises in the
coefficient of $\phi_{\mathrm D}$ on the right-hand side is a constant for any given 
differential equation of this form). The Lipschitz constant of the function on
its right-hand side is $\mathrm e^w$ with some $w \in W_+ \cup W_-$ (cf.\ eqs.~\eqref{eq:def-w+}
and~\eqref{eq:def-w-}).
Since $x_{k, K} > 0$ for all $k$ and $q_{\mathrm D} \in (0,1)$, it is clear that $W_- \subset (-\infty, 0)$,
and so $\mathrm e^w \in (0,1)$ for $w \in W_-$. Therefore the differential equations associated 
to these values of $w$ have small Lipschitz constants and thus do not exhibit any stiffness which, in turn,
means that they do not pose significant challenges for the numerical solvers. 

For the case $w \in W_+$, the situation is completely different though. Here, we encounter 
Lipschitz constants of up to $L_{K,+} = \exp(x_{K,K}^{\mathrm{GLa}} / (1 - q_{\mathrm D}))$. Bearing in mind
the well known result \cite[eq.\ (6.32.8)]{Sz} that
\[
	x_{K,K}^{\mathrm{GLa}} = 4 K (1+o(1)) \mbox{ as } K \to \infty,
\]
it is clear that this Lipschitz constant may be an extremely large number if 
$q_{\mathrm D}$ is close to $1$ (i.e.\ if $\alpha = A - \varepsilon$ with some $A \in \mathbb N$ and a
positive number $\varepsilon$ close to $0$) or $K$ is large. Therefore, these differential equations 
may be extremely stiff and hence difficult to handle numerically. This, in fact, also explains why one 
should only use A-stable solvers for the differential equation.

\subsection{The Error Bounds for the ODE Solver}
\label{sec:rem-ode-error}

The error estimates of Lemma \ref{lem:r-ode} and hence also of Theorem \ref{thm:err-est}
indicate that the error of the ODE solver depends on the maximum of the Lipschitz constants of the
differential equations under consideration and hence, in view of the observation from
Section \ref{sec:rem-stiff}, on the number $K$ of quadrature nodes 
or, more precisely, on the location of the largest node $x_{K,K}^{\mathrm{GLa}}$ of the 
quadrature formula in use. While this is only an upper bound that in fact may drastically 
overestimate the true error \cite[p.\ 7]{Iserles}, it nevertheless clarifies the importance of keeping
the value $x_{K,K}^{\mathrm{GLa}}$ as small as possible.

\subsection{Choice of the Quadrature Formula}

From the results of Mastroianni and Monegato \cite{MaMo2003} one can conclude that it might be useful
to modify the quadrature formula in use in our algorithm. To be precise, they suggest to truncate the
summation in eq.\ \eqref{eq:finalform} prematurely, i.e.\ to let the summation index $k$ run only from
$1$ to some number $K^*$ with $K^* < K$. This concept has two obvious advantages, 
namely it reduces the computational cost and it improves the approximation quality of the
ODE solver (due to the fact that the largest nodes of the quadrature formula, i.e.\ the nodes
whose associated differential equations have the right-hand sides with the largest Lipschitz
constants and are thus most difficult so solve accurately---cf.\ Section \ref{sec:rem-ode-error}---are left out).
Intuitively, one is likely led into the belief that one has to pay for this improvement on the ODE
solver component that this approach generates in terms of a loss of accurary on the numerical integration
component. It has been shown in \cite{MaMo2003} however that, in reality, the convergence rate of the
quadrature formula actually becomes better, at least when the number $K^*$ is chosen
in a proper way.

\subsection{Diffusive Representations for Fractional Integrals}

The assumptions of Lemma \ref{lem:r-ode} are relatively strong in the sense that they
require some degree of smoothness of the function $y$ whose fractional derivative we want
to compute. If an algorithm of the type under consideration is to be used in the context of 
numerically solving fractional initial value problems of the form
\begin{equation}
	\label{eq:frac-ivp}
	D_a^\alpha y(t) = f(t, y(t)), \qquad y(a) = y_0 
\end{equation}
with some $\alpha \in (0,1)$, it is well known \cite[Theorem 6.27]{Di2010}
that these smoothness properties can only be expected to be present in rare and exceptional situations.
If the function is less smooth then the convergence order of Lemma~\ref{lem:r-ode} and Theorem 
\ref{thm:err-est} cannot be achieved. This expectation has been confirmed by the observations from the
numerical experiments in \cite{Di2022} for this particular algorithm and in \cite{Di2022a} for a
different algorithm based on the same fundamental principle (i.e.\ on diffusive representations).

As a possible remedy for this problem, one may rewrite the given initial value problem \eqref{eq:frac-ivp} 
in the equivalent form \cite[Lemma 6.2]{Di2010}
\[
	y(t) = y_0 + J_a^\alpha [f(\cdot, y(\cdot))] (t)
\]
where $J_a^\alpha$ is the Riemann-Liouville integral operator of order $\alpha$ with starting
point $a$, find a diffusive representation analog to \eqref{eq:diff-rep} for this integral operator
and proceed in a corresponding manner. It is conceivable that the functions needing to be 
approximated in this approach possess more favourable smoothness properties, thus possibly 
leading to better convergence properties of the ODE solver.

\end{document}